\crefname{theorem}{Theorem}{Theorems}
\crefname{thm}{Theorem}{Theorems}
\crefname{mainthm}{Theorem}{Theorems}
\crefname{lemma}{Lemma}{Lemmas}
\crefname{lem}{Lemma}{Lemmas}
\crefname{remark}{Remark}{Remarks}
\crefname{claim}{Claim}{Claims}
\crefname{subclaim}{Sub-claim}{Sub-claims}
\crefname{prop}{Proposition}{Propositions}
\crefname{proposition}{Proposition}{Propositions}
\crefname{defn}{Definition}{Definitions}
\crefname{corollary}{Corollary}{Corollaries}
\crefname{conjecture}{Conjecture}{Conjectures}
\crefname{question}{Question}{Questions}
\crefname{chapter}{Chapter}{Chapters}
\crefname{section}{Section}{Sections}
\crefname{figure}{Figure}{Figures}
\crefname{table}{Table}{Tables}
\theoremstyle{plain}
\newtheorem{thm}{Theorem}
\newtheorem*{thm*}{Theorem}
\newtheorem{lemma}[thm]{Lemma}
\newtheorem{corollary}[thm]{Corollary}
\theoremstyle{definition}
\newtheorem{defn}[thm]{Definition}
\theoremstyle{remark}
\newtheorem*{remark}{Remark}
\newcommand{\R}{{\mathbb R}}
\newcommand{\Z}{{\mathbb Z}}
\author[B. Kolesnik]{Brett Kolesnik}
\address{Department of Statistics, University of California, Berkeley}
\email{bkolesnik@berkeley.edu}
\author[M. Sanchez]{Mario Sanchez}
\address{Department of Mathematics, University of California, Berkeley}
\email{mario\textunderscore sanchez@berkeley.edu}
\begin{document}

\title[The geometry of random tournaments]
{
The geometry of random tournaments
}

\maketitle

\vspace{-0.5cm}
\begin{abstract}
A tournament is an orientation of a graph. Each edge 
represents a match,
directed towards the winner. The score sequence
lists the number of wins by each team. 
Landau (1953) characterized score sequences
of the complete graph. 
Moon (1963) showed that the same conditions
are necessary and sufficient for mean score
sequences of random tournaments. 

We present short and natural proofs 
of these results 
that work for any graph
using zonotopes from convex geometry. 
A zonotope is a linear image of a cube. Moon's Theorem 
follows by identifying elements of the cube with distributions
and the linear map as the expectation operator. 
Our proof of Landau's Theorem
combines zonotopal tilings with the theory of mixed subdivisions.
We also show that any mean score sequence can 
be realized by a tournament that is random within a subforest, 
and deterministic otherwise. 
\end{abstract}


\section{Introduction}\label{S_intro}

Let $G=(V,E)$ be a graph on $V=[n]$. 
A {\it tournament} on $G$ is an orientation of
$E$. Intuitively, for each edge $(i,j)\in E$, teams
$i$ and $j$ play a match, and then this edge
is directed towards the winner. 
The {\it score sequence} $s=(s_1,\ldots,s_n)$
lists the number of wins by each team. 

Motivated by observations by Allee \cite{All38}
on pecking orders in animal populations, 
Rapoport \cite{Rap1,Rap2,Rap3} and 
Landau \cite{Lan1,Lan2,Lan3}
pioneered the mathematical study of   
dominance relations. 
Applications include paired comparisons, 
elections and sporting events, see e.g.\ \cite{Rys64,HM66,Moon68}. 
Landau is well-known 
for his 
characterization \cite{Lan3} of 
score sequences in the case that $G=K_n$ is the complete
graph. Note that, in such a tournament, 
a single match is held between each pair of teams. 

\begin{thm}[Landau's Theorem]\label{T_landau}
Any $s\in\Z^n$ is the score sequence
for a tournament on $K_n$ if and only if 
$\sum_i s_i={n\choose2}$ and $\sum_{i\in A} s_i\ge{k\choose2}$
for all $A\subset[n]$ of size $k$. 
\end{thm}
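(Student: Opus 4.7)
The plan is to deduce Landau's theorem from the zonotopal picture of Moon's theorem together with the subforest realization statement promised in the abstract. Let
\[
Z = \sum_{1 \le i < j \le n} [e_i, e_j] \subset \R^n.
\]
A tournament on $K_n$ picks one endpoint of each generating segment, so its score sequence is a lattice point of $Z$. Necessity of the Landau conditions is then immediate: each of the $\binom{k}{2}$ matches among any $k$ teams contributes one win inside the group, and all $\binom{n}{2}$ matches contribute to $\sum_i s_i$.

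For sufficiency, I would first use the zonotopal proof of Moon's theorem to show that $Z$ is cut out exactly by the Landau inequalities (together with $\sum_i x_i = \binom{n}{2}$), so any $s \in \Z^n$ satisfying the hypotheses lies in $Z \cap \Z^n$. The task then reduces to producing a deterministic tournament that realizes such an $s$.

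The key step is the subforest realization: every $s \in Z$ can be written as $s = \sum_{(i,j)} \bigl(p_{ij} e_i + (1 - p_{ij}) e_j\bigr)$, with probabilities $p_{ij}$ that are $\{0,1\}$-valued outside some subforest $F \subseteq K_n$. Given an integer $s$ with such a realization, I would then peel leaves off $F$: if $i$ is a leaf of $F$ with unique forest-neighbor $j$, then all other contributions to $s_i$ are integers, so $p_{ij} \in \{0, 1\}$; deleting the edge $(i,j)$ from $F$ and iterating forces every $p_{ij}$ into $\{0, 1\}$, yielding the desired deterministic tournament.

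I expect the main obstacle is the subforest realization itself, which is where mixed subdivisions and zonotopal tilings enter. The approach will be to use a fine zonotopal tiling of $Z$: top-dimensional cells correspond to spanning trees of $K_n$ (the bases of the graphical matroid), and unimodularity of the generators $e_i - e_j$ forces each cell to be a unit parallelotope. On a cell, the offset of $s$ from a chosen base vertex records the on-tree probabilities, while the tiling prescribes the off-tree orientations; points on shared faces correspond to proper subforests, and coherence of the mixed subdivision will glue these local choices into a globally consistent subforest realization.
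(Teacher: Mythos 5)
Your proposal is correct and follows essentially the same route as the paper: identify the set of mean score sequences with the permutahedron $Z_{K_n}$, read the Landau inequalities off its hyperplane description, and then use the fine zonotopal tiling of $Z_{K_n}$ by translates $v_F+Z_F$ of spanning-tree zonotopes, together with its mixed-subdivision structure, to realize any point by a tournament that is deterministic off a forest (this is \cref{lem: subdivision} and \cref{T_tree}). The one genuine divergence is the final integrality step: the paper deduces that an integer point of $Z_G$ must be a vertex of its tile from unimodularity (each half-open parallelepiped spanned by a forest's edge vectors has determinant $1$ and hence contains a single lattice point), whereas you peel leaves off $F$, using integrality of $s_i$ at a leaf $i$ to force the unique forest edge at $i$ to be deterministic, and then induct on the smaller forest. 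Your leaf-peeling argument is valid and slightly more elementary, since it needs only the existence of the subforest realization and no lattice-point count; the paper's version buys the stronger geometric byproduct that every lattice point of $Z_G$ is the image under $\pi$ of a vertex of the cube $[0,1]^m$.
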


The necessity of these conditions is clear, since the total wins
by any $k$ teams is at least the number of matches
held between them. Sufficiency is less obvious; 
however, many proofs 
have since appeared, see e.g.\  
\cite{BS77,T81,BK09,C14,BF15}.

Ten years later, Moon \cite{M63} discovered that the same conditions
are necessary and sufficient for random tournaments
on $K_n$. 

\begin{defn}
A {\it random tournament} on $G$ is a
collection of real numbers $p_{ij}\in[0,1]$, $i<j$, one for each edge $(i,j)\in E$. The
{\it mean score sequence} $x=(x_1,\ldots,x_n)$ of a random tournament is given by
    \[ x_i = \sum_{\substack{j>i \\ (i,j) \in E}} p_{ij} + \sum_{\substack{j<i \\ (i,j) \in E}} (1 - p_{ji}). \]
\end{defn}

Intuitively, each edge in $G$ corresponds to a match between two teams $i< j$ and the value $p_{ij}$ 
is the probability that team $i$ wins and $1 - p_{ij}$ is the probability that team $j$ wins. 
The entry $x_i$ of the mean score sequence is the expected number of wins for team $i$.

\begin{thm}[Moon's Theorem]\label{T_moon}
Any $x\in\R^n$ is the mean score sequence
for a random tournament on $K_n$ if and only if 
$\sum_i x_i={n\choose2}$ and $\sum_{i\in A} x_i\ge{k\choose2}$
for all $A\subset[n]$ of size $k$. 
\end{thm}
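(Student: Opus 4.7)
The plan is to realize the set of mean score sequences as a zonotope and then identify this zonotope with the polytope cut out by Moon's inequalities. Writing $E = \binom{[n]}{2}$ and $\{e_1,\ldots,e_n\}$ for the standard basis of $\R^n$, the mean-score map
\[
\Phi \colon [0,1]^E \to \R^n,\qquad \Phi(p) \;=\; \sum_{i<j}\bigl(p_{ij}\, e_i + (1-p_{ij})\, e_j\bigr),
\]
is affine linear. As $p_{ij}$ sweeps $[0,1]$, the edge $(i,j)$ contributes the segment $[e_j, e_i]$, so the set of mean score sequences is exactly the zonotope $Z_n = \sum_{i<j} [e_j, e_i]$. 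This is the viewpoint the abstract promises: elements of the cube $[0,1]^E$ are distributions on orientations, and $\Phi$ is the expectation operator.

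Necessity is the same edge-counting that proves Landau's bound. For $A \subset [n]$ of size $k$ and $x = \Phi(p)$, any edge $(i,j)$ with $i,j \in A$ contributes $p_{ij}+(1-p_{ij})=1$ to $\sum_{i \in A} x_i$, while edges with exactly one endpoint in $A$ contribute a value in $[0,1]$; hence $\sum_{i \in A} x_i \ge \binom{k}{2}$, with equality at $A=[n]$ giving the prescribed total $\binom{n}{2}$.

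For sufficiency I would show that $Z_n$ coincides with the polytope
\[
P_n \;=\; \Bigl\{x \in \R^n \;:\; \textstyle\sum_i x_i = \binom{n}{2},\ \sum_{i\in A} x_i \ge \binom{|A|}{2}\ \forall\, A \subset [n]\Bigr\}.
\]
The inclusion $Z_n \subset P_n$ is exactly the necessity direction. For the reverse, I would invoke the classical fact (essentially Rado's theorem) that $P_n$ is the \emph{permutohedron} $\Pi_n$, i.e., the convex hull of the permutations of $(0,1,\ldots,n-1)$. Each such permutation is the score sequence of a transitive tournament on $K_n$, and therefore arises as a vertex of $Z_n$ obtained by selecting a deterministic endpoint of every segment; by convexity, $P_n = \Pi_n \subset Z_n$.

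The main obstacle is the identification $P_n = \Pi_n$. I would settle it by a short linear-programming argument: for a generic cost vector $c \in \R^n$, any minimizer $x \in P_n$ of $\langle c,\cdot\rangle$ must be reverse-sorted with respect to $c$ (otherwise swapping two coordinates and reimposing the binding facet inequalities lowers the objective), and such an $x$ is forced by the binding constraints to be the corresponding permutation of $(0,1,\ldots,n-1)$. Hence the vertices of $P_n$ are exactly these permutations, so $P_n = \Pi_n$ and Moon's theorem follows.
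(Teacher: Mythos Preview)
Your proposal is correct and follows essentially the same route as the paper: identify the set of mean score sequences with the graphic zonotope $Z_{K_n}$ via the affine map from the cube, and then invoke the hyperplane description of that zonotope (Rado's theorem, in the $K_n$ case) to match it with Moon's inequalities. The only difference is packaging: the paper simply cites the hyperplane description of $Z_G$ and remarks that for $K_n$ this is Rado, whereas you factor the identification through the permutohedron $\Pi_n$, observe that its vertices arise from transitive tournaments (giving $\Pi_n\subset Z_n$), and sketch an LP/majorization argument for $P_n=\Pi_n$ rather than citing it.
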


Generalizations of these results have been studied in e.g.\
\cite{H65,C14,BF15}. 

Our purpose is to provide short and natural proofs of 
Landau's and Moon's Theorems 
for {\it any} graph $G$ using zonotopes from 
convex geometry.
We were led in this direction by realizing that the hyperplane
description of the permutahedron, the graphic zonotope $Z_G$ when $G=K_n$
(see below for definitions), 
coincides with the conditions in \cref{T_landau,T_moon}. 
It appears that this connection 
has not been fully capitalized on in the literature. 
For instance, seen in this light, \cref{T_moon} is immediate 
by earlier work of Rado \cite{R52}. 
We think the zonotopal perspective will be useful for studying further properties
of tournaments. 
Finally, we mention here that our arguments extend immediately to any
multigraph $M$, however, as this becomes  
notationally cumbersome, we leave this to the 
interested reader.

Throughout, we fix a graph $G=(V,E)$
with $V=[n]$ and $|E|=m$.

\section{A zonotopal proof of Moon's Theorem}

A {\it zonotope} \cite{Z95} is an affine image of a cube. 
In particular, 
the {\it graphic zonotope} $Z_G$ of the graph $G=(V,E)$ 
is the polytope given by the Minkowski sum
 \begin{equation}\label{E_ZG}
 Z_G = \sum_{\substack{(i,j) \in E \\ i<j}} [e_i, e_j].
  \end{equation}
In this case, $Z_G$ is the image of the cube $[0,1]^{m}$, where
recall we let $|E|=m$. 
To understand the projection map, let $e_{ij}$, for $i<j$ and $(i,j) \in E$, 
denote a basis vector of $\R^{m}$ and let $e_i$ denote a basis vector of $\R^n$. 
Then the projection map $\pi: \R^{m} \to \R^n$ satisfies 
 \begin{equation}\label{E_pi}
 \pi(a_{ij} e_{ij}) = a_{ij}e_i + (1- a_{ij}) e_j.
 \end{equation}
 Hence, for any $\{a_{ij}:i<j\}\in[0,1]^{m}$,
 the image under $\pi$ of
 any $\sum_{i<j}a_{ij} e_{ij}$ 
 is the vector with $i$th coordinate 
$\sum_{j>i}a_{ij}+\sum_{j<i}(1-a_{ji})$. Hence
Moon's Theorem essentially follows by 
identifying $\{a_{ij}:i<j\}$ with a random tournament. 
Then the projection map $\pi$ 
is simply the expectation operator.

\begin{thm}[Generalized Moon's Theorem]\label{thm: moon's theorem}
For any $ A \subset [n]$ let $\phi(A)$ be the number of edges in 
the subgraph $G|_A$ of $G$ induced by $A$. 
Then
any $x = (x_1,\ldots,x_n) \in \R^n$ is the mean score sequence of 
a random tournament on $G$ if and only if $\sum_{i} x_i =m$ and 
$\sum_{i\in A}x_i\ge\phi(A)$ for all $A\subset [n]$. 
\end{thm}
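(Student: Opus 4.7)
The plan is to reduce \cref{thm: moon's theorem} to an equality of convex polytopes. By \eqref{E_pi}, the image $\pi(a)$ of any $a = \{a_{ij}\}\in[0,1]^m$ has $i$-th coordinate equal to the mean score $x_i$ of team $i$ under the random tournament $p_{ij}=a_{ij}$. Hence the set of all mean score sequences coincides with $Z_G=\pi([0,1]^m)$, and the theorem amounts to the hyperplane description $Z_G = P$, where $P$ denotes the polytope in $\R^n$ cut out by the stated linear conditions.

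The inclusion $Z_G\subseteq P$ follows from a direct edge-by-edge estimate. For any $A\subseteq[n]$ and any $a\in[0,1]^m$,
\[
\sum_{i\in A}\pi(a)_i \;=\; \sum_{\substack{(i,j)\in E\\ i<j}}\bigl[a_{ij}\mathbf{1}_A(i)+(1-a_{ij})\mathbf{1}_A(j)\bigr] \;\ge\; \sum_{\substack{(i,j)\in E\\ i<j}}\mathbf{1}_A(i)\mathbf{1}_A(j) \;=\; \phi(A),
\]
since each bracketed term lies in $[0,1]$ and equals $1$ exactly when both endpoints lie in $A$. Taking $A=[n]$ and using $a_{ij}+(1-a_{ij})=1$ gives the equality $\sum_i x_i=m$.

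For the reverse inclusion $P\subseteq Z_G$, the plan is to compare support functions. For any $c\in\R^n$, the Minkowski decomposition \eqref{E_ZG} of $Z_G$ yields $\min_{x\in Z_G}\langle c,x\rangle=\sum_{(i,j)\in E}\min(c_i,c_j)$. To compute the same minimum over $P$, relabel so that $c_1\le\cdots\le c_n$ and set up the LP dual: dual variables are $y_A\ge 0$ for each inequality, together with a free $\lambda$ for the equality $\sum_ix_i=m$, subject to $\lambda+\sum_{A\ni i}y_A=c_i$. The natural ``chain'' certificate $\lambda=c_1$ and $y_{\{k+1,\ldots,n\}}=c_{k+1}-c_k\ge 0$ (all other $y_A=0$) is dual-feasible, and a short telescoping computation shows that it attains dual objective value $\sum_{(i,j)\in E}\min(c_i,c_j)$. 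By weak duality the two minima coincide, so the support functions of the closed convex sets $Z_G$ and $P$ agree, forcing $Z_G=P$.

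The main obstacle is identifying the correct dual certificate; the sorted order of $c$ dictates the chain of upper sets $\{k+1,\ldots,n\}$ that must carry all the dual mass. Equivalently (as hinted in the introduction), one can observe that $\phi$ is supermodular — any edge with one endpoint in $A\setminus B$ and the other in $B\setminus A$ is counted in $\phi(A\cup B)$ but in none of $\phi(A)$, $\phi(B)$, or $\phi(A\cap B)$ — so that $P$ is the base polytope of a supermodular function and the identification $P=Z_G$ becomes a direct instance of Rado's theorem, equivalently of Edmonds' polymatroid theorem applied via the greedy algorithm.
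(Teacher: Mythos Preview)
Your argument is correct and shares the paper's overall strategy: both proofs first identify the set of mean score sequences with the graphic zonotope $Z_G$ via the projection \eqref{E_pi}, reducing the theorem to the hyperplane description $Z_G=P$. The paper then simply cites this description from \cite{AA17} (equivalently Rado \cite{R52} in the $K_n$ case), whereas you prove it from scratch: the containment $Z_G\subseteq P$ by the edge-by-edge estimate, and $P\subseteq Z_G$ by matching support functions through an explicit LP-dual ``chain'' certificate (which is exactly the greedy/Edmonds argument specialized to the supermodular function $\phi$). So your route is the same in spirit but more self-contained; the paper's proof is shorter because it outsources precisely the step you carry out. One minor wording quibble: in the $Z_G\subseteq P$ direction, the bracketed term $a_{ij}\mathbf{1}_A(i)+(1-a_{ij})\mathbf{1}_A(j)$ need not equal $1$ \emph{only} when both endpoints lie in $A$; the clean justification is that it is a convex combination of $\mathbf{1}_A(i)$ and $\mathbf{1}_A(j)$ and hence dominates their minimum $\mathbf{1}_A(i)\mathbf{1}_A(j)$.
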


\begin{proof} Identify the cube $[0,1]^{m}$ with the 
set of random tournaments on $G$ by mapping $\{a_{ij}: i < j\}$ to the random tournament $X$ on $G$ 
where $p_{ij} = a_{ij}$. As discussed, it follows by \eqref{E_pi} that the image
of $X$ under $\pi$ is its mean score
sequence $x$.
Therefore, $x \in \R^n$ is a mean score sequence if and only if $x \in \operatorname{image}(\pi) = Z_G$.

To conclude, we use the following hyperplane description of the graphic zonotope $Z_G$, which follows from \cite{AA17}:

 \begin{equation}\label{E_HP}
 Z_G = \{ x \in \R^n : \sum_{i}x_i = m, \; \sum_{i \in A} x_i \geq \phi(A), \; \forall A \subset [n] \}.
\end{equation}
This description is obtained from the one in \cite{AA17} by realizing that $\phi(A) = \mu([n]) - \mu(A)$, 
where $\mu$ is the submodular function that defines $Z_G$.
\end{proof}

\begin{remark} In the classical case, when $G=K_n$,
the conditions above coincide with $x$ being majorized, as in \cite{MOA11},
by $(0,1,\ldots,n-1)$,
and \eqref{E_HP} is a result of Rado \cite{R52}. 
\end{remark}

\section{A refinement}

Next, by 
combining zonotopal tilings with the theory of mixed subdivisions,
we obtain a refinement of \cref{thm: moon's theorem}
that implies a generalization of Landau's theorem.
Informally, we find that any mean score sequence $x$
can be realized by a tournament with at most a 
``forest's worth of randomness.''

\begin{thm}\label{T_tree}
For any $x \in Z_G$ there exists 
a forest $F \subset G$ and a random tournament $X$ 
on $G$ with mean score sequence $x$ such that for 
every edge $(i,j) \not \in E(F)$, 
we have $p_{ij} = 0$ or $1$ in the tournament $X$.
Furthermore, if $x\in\Z^n$, then the same is true
for $(i,j) \in E(F)$. Hence, in this case, $x$ is the score sequence
of a (deterministic) tournament on $G$. 
\end{thm}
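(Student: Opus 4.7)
The plan is to decompose the zonotope $Z_G$ into parallelepipeds indexed by spanning forests of $G$, and then read off $x \in Z_G$ against the tile that contains it. The existence of such a decomposition is a standard consequence of the theory of regular zonotopal tilings or, equivalently, fine mixed subdivisions (see e.g.\ Ziegler \cite{Z95}). Each maximal cell has the form
\[
C_{F, w} = \sum_{(i,j) \in E(F)}[e_i, e_j] + \sum_{(i,j) \notin E(F)} w_{ij} = v_{F,w} + Z_F,
\]
where $F$ is a spanning forest of $G$, each $w_{ij} \in \{e_i, e_j\}$, and $v_{F,w} = \sum_{(i,j) \notin E(F)} w_{ij}$. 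The crucial structural input is that maximal cells in a fine mixed subdivision of a zonotopal Minkowski sum correspond to bases of the matroid of the generators $\{e_j - e_i\}$, which for the graphic zonotope are exactly the spanning forests of $G$.

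Given $x \in Z_G$, I would pick any tile $C_{F,w}$ containing $x$. This immediately encodes a random tournament: for each non-forest edge $(i,j) \notin E(F)$, set $p_{ij} = 1$ if $w_{ij} = e_i$ and $p_{ij} = 0$ if $w_{ij} = e_j$; for each forest edge $(i,j) \in E(F)$, set $p_{ij} = a_{ij}$, where $(a_{ij})_{(i,j) \in E(F)} \in [0,1]^{|E(F)|}$ is the unique tuple satisfying $x - v_{F,w} = \sum_{(i,j) \in E(F)} (a_{ij} e_i + (1-a_{ij}) e_j)$. Uniqueness holds because $F$ is a forest, so the vectors $\{e_i - e_j : (i,j) \in E(F)\}$ are linearly independent. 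By \eqref{E_pi} the mean score sequence of this tournament is precisely $x$, which settles the first assertion.

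For the integer case $x \in \Z^n$, the remaining task is to show that each $a_{ij}$ above is itself forced into $\{0,1\}$. I would do this by induction on $|E(F)|$, peeling off leaves of $F$: let $\ell$ be a leaf of $F$ and let $(i_\ell, j_\ell)$ be its unique incident forest-edge. The $\ell$-th coordinate of $x - v_{F,w}$ equals either $a_{i_\ell j_\ell}$ or $1 - a_{i_\ell j_\ell}$, depending on whether $\ell = i_\ell$ or $\ell = j_\ell$, and since this coordinate is an integer lying in $[0,1]$ it must be in $\{0,1\}$; hence so is $a_{i_\ell j_\ell}$. Deleting this edge and vertex reduces to a strictly smaller forest, and the induction closes, recovering the sufficiency direction of a generalized Landau theorem. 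The main obstacle I anticipate is the first step: pinning down precisely which mixed subdivision (or zonotopal tiling) of $Z_G$ to use and verifying that its maximal cells take the promised form $v_{F,w} + Z_F$; granted this structural fact, the remainder is essentially parametric bookkeeping together with the leaf-peeling induction.
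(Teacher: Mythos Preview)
Your proposal is correct and matches the paper's approach in its main structure: tile $Z_G$ by translates $v_F + Z_F$ indexed by spanning forests (the paper obtains this from Stanley \cite{S90} together with the mixed-subdivision viewpoint of \cite{DRS10}), locate $x$ in a maximal tile, and read off the tournament from the decomposition you call $C_{F,w}$, which is exactly the paper's \eqref{eq: mixed subdivision description}. The one substantive difference is the integer case. The paper appeals to the last clause of \cref{lem: subdivision}---that every lattice point of $Z_G$ is a vertex of some tile---which is justified there via unimodularity of the generating vectors; you instead give a self-contained leaf-peeling induction on $|E(F)|$. Your argument is more elementary and in fact proves slightly more (a lattice point lying in \emph{any} tile $v_F+Z_F$ is already a vertex of that tile, not merely of some tile), while the paper's route is shorter but leans on the determinant remark following \cref{lem: subdivision}.
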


We obtain the following immediately. 

\begin{corollary}[Generalized Landau's Theorem]
Any $s = (s_1,\ldots,s_n) \in \Z^n$ is the score sequence of 
a tournament on $G$ if and only if $\sum_{i} s_i = m$ and 
$\sum_{i\in A}s_i\ge\phi(A)$ for all $A\subset [n]$. 
\end{corollary}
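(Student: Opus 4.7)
The plan is to obtain this corollary as an immediate consequence of \cref{thm: moon's theorem} and \cref{T_tree}, as anticipated by the remark ``We obtain the following immediately.''

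For necessity, I would observe that a deterministic tournament on $G$ is a special case of a random tournament in which every probability $p_{ij}$ lies in $\{0,1\}$, and in that case the score sequence coincides with the mean score sequence. Thus any score sequence $s \in \Z^n$ is in particular a mean score sequence, and \cref{thm: moon's theorem} immediately supplies both $\sum_i s_i = m$ and $\sum_{i\in A} s_i \geq \phi(A)$ for every $A \subset [n]$. One can alternatively verify these by direct counting: the total number of wins equals the total number of matches $m$, and the teams in $A$ collectively win at least the $\phi(A)$ matches played among themselves.

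For sufficiency, suppose $s \in \Z^n$ satisfies $\sum_i s_i = m$ and $\sum_{i\in A} s_i \geq \phi(A)$ for all $A \subset [n]$. By \cref{thm: moon's theorem}, these are exactly the inequalities defining $Z_G$, so $s \in Z_G$, meaning $s$ is the mean score sequence of some random tournament on $G$. Applying \cref{T_tree} to this integer point $s$, the ``furthermore'' clause upgrades the realization so that $p_{ij} \in \{0,1\}$ on every edge of $G$, both inside and outside the forest $F$. This is a deterministic tournament whose score sequence is $s$. No real obstacle arises, since the substantive content has been front-loaded into \cref{thm: moon's theorem} (the hyperplane description of $Z_G$) and into \cref{T_tree} (the integrality refinement via zonotopal tilings and mixed subdivisions); the corollary is then just a matter of assembling these two ingredients.
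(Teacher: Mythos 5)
Your proposal is correct and follows exactly the route the paper intends: necessity from the fact that a deterministic tournament is a random tournament (or direct counting), and sufficiency by combining the hyperplane description in \cref{thm: moon's theorem} with the integrality clause of \cref{T_tree}. The paper leaves this assembly implicit ("We obtain the following immediately"), and your write-up simply makes it explicit.
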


To prove these results, we will need to recall two different types of subdivisions of polytopes. 
First, a {\it zonotopal subdivision} of a zonotope $P$ is a collection of zonotopes 
$\{P_i\}$ such $\bigcup P_i = P$ and any two zonotopes $P_i$ and $P_j$ intersect properly; i.e., 
$P_i$ and $P_j$ intersect at a face of both or not all, and their intersection is also in the collection $\{P_i\}$. 
We call the zonotopes $P_i$ the {\it tiles} of the subdivision. 
The following lemma is implicit in the proof of Theorem 2.2 in Stanley \cite{S90}.

\begin{lemma}\label{lem: subdivision} 
There are vectors $v_F \in \R^n$ for each 
forest $F \subset G$ such that 
$\{v_F + Z_F\}$ is a zonotopal subdivision of $Z_G$. The full-dimensional tiles are those corresponding to spanning forests. 
Furthermore, every lattice point in $Z_G$ appears as a vertex 
of a zonotope $v_F + Z_F$ for some spanning forest $F$.
\end{lemma}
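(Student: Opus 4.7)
The plan is to produce the subdivision by a generic regular lifting, identify its full-dimensional cells with spanning forests via the graphical matroid, and then deduce the lattice point property from unimodularity. First, I would fix a generic height vector $h=(h_e)_{e\in E}\in\R^E_{>0}$ and, for each edge $e=(i,j)$ with $i<j$, lift the segment $[e_i,e_j]\subset\R^n$ to the segment $[(e_i,0),(e_j,h_e)]\subset\R^{n+1}$. Let $\widetilde{Z}\subset\R^{n+1}$ be the Minkowski sum of these lifted segments. Projecting the lower faces of $\widetilde{Z}$ (those visible from $-\infty$ in the last coordinate) back to $\R^n$ produces a regular zonotopal subdivision of $Z_G$, which is the construction implicit in Stanley's proof of Theorem~2.2 in \cite{S90}.

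\medskip

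Next, by the general theory of zonotopal tilings, each cell of this subdivision has the form $v_S+Z_S$ for some edge subset $S\subset E$ and translation vector $v_S\in\Z^n$, where $v_S$ records, for every $e\notin S$, whether the corresponding lifted segment contributes its bottom or top endpoint to the cell. Genericity of $h$ forces every full-dimensional cell to correspond to an $S$ whose edge vectors $\{e_j-e_i:(i,j)\in S\}$ are linearly independent and maximal with this property, i.e., a spanning forest of $G$. This matches the matroid-theoretic count of full-dimensional cells that Stanley exploits to compute $\operatorname{vol}(Z_G)$.

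\medskip

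For the lattice point claim, the key point is that the signed incidence matrix of an oriented graph is totally unimodular, so for any spanning forest $F$ the parallelepiped $Z_F$ is unimodular: it has normalized volume $1$ and contains exactly $2^{|F|}$ lattice points, all of which are vertices. Translating by the integer vector $v_F$ preserves this property, so no tile of the subdivision contains a lattice point in its relative interior. Since the tiles cover $Z_G$ and meet only along shared faces, every lattice point of $Z_G$ must appear as a vertex of some $v_F+Z_F$ with $F$ a spanning forest.

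\medskip

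The main obstacle is the combinatorial identification in the second step: although it is intuitive that generic lifts produce cells indexed by bases of the graphical matroid, rigorously pinning down the translation vectors $v_F$ and checking that lower-dimensional cells glue consistently along shared faces of both neighbors requires the full strength of regular subdivision theory, or alternatively a direct verification matching Stanley's argument.
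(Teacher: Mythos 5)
Your proposal is correct and follows essentially the same route as the paper: the paper simply cites Stanley's Theorem~2.2 for the existence of the tiling and its indexing by forests, and then proves the lattice-point claim by noting that each (half-open) parallelepiped of a forest has determinant~$1$, which is exactly your unimodularity argument in closed form. The only difference is that you sketch the lifting construction explicitly where the paper defers to \cite{S90}, and you rightly flag that the detailed verification of the cell structure is the part being outsourced.
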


The last statement of this lemma is true since the number of lattice points contained in the half-open 
parallelopiped generated by a linearly independent set of vectors is given by the determinant of the matrix 
whose columns are vectors in the set. In the case of graphic zonotopes, each half-open paralleopiped is 
generated by vectors corresponding to an edge in a forest and the corresponding matrix has determinant $1$. 
See \cite{S90} for more details.

The second type of subdivison comes from the theory of
 {\it mixed subdivisions}. Let $P = P_1 + \cdots + P_k$ be the Minkowski sum
of polytopes. 
A {\it mixed cell} $\sum_{i} B_i$ is a Minkowski sum of polytopes, 
where the vertices of $B_i$ are contained in the vertices of $P_i$. 
A {\it mixed subdivision} of $P$ is a collection of mixed cells which cover $P$ and intersect properly; i.e., 
for any two mixed cells $\sum B_i$ and $\sum B_i'$ the polytopes $B_i$ and $B_j'$ intersect at a face of both, 
or not at all.

In the case of graphic zonotopes, the subdivision given by \cref{lem: subdivision} is a mixed subdivision, see Lemma 9.2.10 in 
De Loera et al.\ \cite{DRS10}. This means that every tile $v_F + Z_F$ can be written as $\sum_{(i,j) \in E} B_{ij}$, 
where $B_{ij}$ is a face of the line segment $[e_i, e_j]$. The only faces of these segments are the vectors $e_i$ and $e_j$ and the entire segment $[e_i,e_j]$. 
Since $Z_F = \sum_{(i,j) \in E(F)} [e_i, e_j]$ this means that $E$ can be partitioned $A\cup B\cup E(F)=E$
in such a way that 
\begin{equation}
\label{eq: mixed subdivision description}
v_F + Z_F = \sum_{\substack{(i,j) \in A \\ i < j}} e_i  + \sum_{\substack{(i,j) \in B \\ i < j}} e_j + \sum_{\substack{(i,j) \in E(F)\\ i<j}} [e_i, e_j].
\end{equation}

With this at hand, we prove our main result. 

\begin{proof}[Proof of \cref{T_tree}]
If $x \in Z_G$, then it is contained in one of the full-dimensional tiles of the subdivision of $Z_G$ given by \cref{lem: subdivision}. 
Let $F$ be a forest corresponding to one of the tiles $v_F + Z_F$ that contains $x$. 
Then \eqref{eq: mixed subdivision description} tells us that $x$ is of the form
 \[x =\sum_{\substack{(i,j) \in A \\ i < j}} e_i  + \sum_{\substack{(i,j) \in B \\ i < j}} e_j + \sum_{\substack{(i,j) \in E(F)\\ i<j}} [a_{ij}e_i + (1- a_{ij})e_j]. \]
for some $0 \leq a_{ij} \leq 1$. 

Let $X$ be the random tournament where, for $i<j$, 
 \[ p_{ij} = \left \{ \begin{array}{ll}
    0 & \text{if $(i,j) \in A$} \\
    1 & \text{if $(i,j) \in B$} \\
    a_{ij} & \text{otherwise.}
    \end{array} \right.\]
Then the mean score sequence of $X$ is $x$.
If $x\in\Z^n$ is integer-valued, then in fact (also by \cref{lem: subdivision}) all $a_{ij}\in\{0,1\}$,
in which case $X$ is a deterministic tournament. 
\end{proof}

\begin{remark} Geometrically, this theorem states that every lattice point of $Z_G$ is the image of some vertex of the cube $[0,1]^{m}$ under the map $\pi$.
\end{remark}

\subsection*{Acknowledgements}

We thank 
David Aldous,
Federico Ardila,  
Persi Diaconis, 
Jim Pitman and 
Richard Stanley
for helpful conversations.

\providecommand{\bysame}{\leavevmode\hbox to3em{\hrulefill}\thinspace}
\providecommand{\MR}{\relax\ifhmode\unskip\space\fi MR }
\providecommand{\MRhref}[2]{%
  \href{http://www.ams.org/mathscinet-getitem?mr=#1}{#2}
}
\providecommand{\href}[2]{#2}

\end{document}